\newtheorem{theorem}{Theorem}
\theoremstyle{plain}
\numberwithin{equation}{section}
\begin{document}
\title[Dual Fibonacci Quaternions]{Dual Fibonacci Quaternions}
\author{Semra KAYA NURKAN}
\address{ }
\email{}
\urladdr{}
\author{\.{I}lkay ARSLAN G\"{U}VEN}
\curraddr{ }
\email{ }
\urladdr{}
\date{}
\subjclass[2000]{11B39 , 11E88 , 11R52.}
\keywords{Fibonacci quaternion, dual Fibonacci quaternion, dual Lucas
quaternion}

\begin{abstract}
In this study, we define the dual Fibonacci quaternion and the dual Lucas
quternion. We derive the relations between the dual Fibonacci and the dual
Lucas quaternion which connected the Fibonacci and the Lucas numbers.
Furthermore, we give the Binet and Cassini formulas for these quaternions.
\end{abstract}

\maketitle

\section{\textbf{Introduction}}

The quaternions are a number system which extends to the complex numbers.
They are members of noncommutative algebra , first invented by William Rowan
Hamilton in 1843. Hamilton defined a quaternion as the quotient of two
vectors. The algebra of quaternions is denoted by$\mathbb{\ H}$ , also by
the Clifford algebra classifications $Cl_{0,2}(R)\cong Cl_{3,0}^{0}(R).$ A
quaternion is defined in the form%
\begin{equation}
q=q_{0}+iq_{1}+jq_{2}+kq_{3}  \tag{1.1}
\end{equation}%
where $q_{0},$ $q_{1},$ $q_{2},$ $q_{3}$ are real numbers and $i,j,k$ are
standard orthonormal basis in $\mathbb{R}^{3}$ which satisfy the quaternion
multiplication rules as%
\begin{eqnarray*}
i^{2} &=&j^{2}=k^{2}=-1 \\
ij &=&-ji=k\text{ \ , \ \ }jk=-kj=i\text{ \ , \ \ }ki=-ik=j.
\end{eqnarray*}%
The quaternion $q$ can be written as 
\begin{equation*}
q=S_{q}+V_{q}
\end{equation*}%
where $S_{q}=q_{0}$ and $V_{q}=iq_{1}+jq_{2}+kq_{3}$. Here $S_{q}$ is called
the scalar part and $V_{q}$ is called the vector part of the quaternion $q$.
If two quaternions are $q=q_{0}+iq_{1}+jq_{2}+kq_{3}$ and $%
p=p_{0}+ip_{1}+jp_{2}+kp_{3}$, then the addition and subtraction of them is 
\begin{equation*}
q\mp p=(q_{0}\mp p_{0})+i(q_{1}\mp p_{1})+j(q_{2}\mp p_{2})+k(q_{3}\mp p_{3})
\end{equation*}%
and the multiplication is%
\begin{eqnarray*}
q.p &=&q_{0}p_{0}-(q_{1}p_{1}+q_{2}p_{2}+q_{3}p_{3}) \\
&&+i\text{ }(q_{0}p_{1}+q_{1}p_{0}+q_{2}p_{3}-q_{3}p_{2}) \\
&&+j\text{ }(q_{0}p_{2}+q_{2}p_{0}+q_{3}p_{1}-q_{1}p_{3}) \\
&&+k\text{ }(q_{0}p_{3}+q_{3}p_{0}+q_{1}p_{2}-q_{2}p_{1}).
\end{eqnarray*}%
The conjugate of the quaternion $q=q_{0}+iq_{1}+jq_{2}+kq_{3}$ is given by $%
\overline{q};$%
\begin{equation*}
\overline{q}=q_{0}-iq_{1}-jq_{2}-kq_{3}.
\end{equation*}%
Also if two quaternions are $q$ and $p$, then%
\begin{eqnarray*}
\overline{\overline{q}} &=&q \\
\overline{(qp)} &=&\overline{p}\text{ }\overline{q}.
\end{eqnarray*}%
The norm of $q$ is defined by $N_{q}$ and 
\begin{equation*}
N_{q}=\left\Vert q\right\Vert =q\overline{q}%
=q_{0}^{2}+q_{1}^{2}+q_{2}^{2}+q_{3}^{2}.
\end{equation*}%
The quaternion $q$ is called a unit quaternion if $N_{q}=1.$

The inverse of $q$ is denoted by $q^{-1}$ as%
\begin{equation*}
q^{-1}=\frac{\overline{q}}{N_{q}}=\frac{\overline{q}}{q\overline{q}}.
\end{equation*}

Since we will clarify the dual Fibonacci quaternions, we now give the basic
notations of dual quaternions.

Clifford \cite{Clif} published his work on dual numbers in 1873 and provided
\ us with a powerfull tool for the analysis of complex numbers. The dual
numbers extend to the real numbers has the form%
\begin{equation*}
d=a+\varepsilon a^{\ast }
\end{equation*}%
where $\varepsilon $ is the dual unit and $\ \varepsilon ^{2}=0$ $,$ $%
\varepsilon \neq 0$. Dual numbers form two dimensional commutative
associative algebra over the real numbers. Also the algebra of dual numbers
is a ring.

A dual quaternion is an extension of dual numbers whereby the elements of
that quaternion \ are dual numbers. Dual quaternions are used as an
appliance for expressing and analyzing the physical properties of rigid
bodies. They are computationally efficient approach of representing rigid
transforms like translation and rotation.

The dual quaternion is represented in the form%
\begin{equation*}
Q=q+\varepsilon q^{\ast }
\end{equation*}%
where $q$ and $q^{\ast }$ are quaternions and $\varepsilon $ is the dual
unit.

If $\ q=q_{0}+iq_{1}+jq_{2}+kq_{3}$ \ and \ $q^{\ast }=q_{0}^{\ast
}+iq_{1}^{\ast }+jq_{2}^{\ast }+kq_{3}^{\ast }$, \ then the dual quaternion $%
Q$ can be denoted as;%
\begin{eqnarray*}
Q &=&q+\varepsilon q^{\ast } \\
&=&(q_{0}+iq_{1}+jq_{2}+kq_{3})+\varepsilon (q_{0}^{\ast }+iq_{1}^{\ast
}+jq_{2}^{\ast }+kq_{3}^{\ast }) \\
&=&(q_{0}+\varepsilon q_{0}^{\ast })+i(q_{1}+\varepsilon q_{1}^{\ast
})+j(q_{2}+\varepsilon q_{2}^{\ast })+k(q_{3}+\varepsilon q_{3}^{\ast }).
\end{eqnarray*}%
So the dual quaternion $Q$ is constructed from eight real parameters. Also $%
Q $ can be written as%
\begin{equation*}
Q=S_{Q}+V_{Q}
\end{equation*}%
where%
\begin{eqnarray*}
S_{Q} &=&q_{0}+\varepsilon q_{0}^{\ast }=S_{q}+\varepsilon S_{q^{\ast }} \\
V_{Q} &=&i(q_{1}+\varepsilon q_{1}^{\ast })+j(q_{2}+\varepsilon q_{2}^{\ast
})+k(q_{3}+\varepsilon q_{3}^{\ast })=V_{q}+\varepsilon V_{q^{\ast }}.
\end{eqnarray*}%
Similarly in quaternions, $S_{Q}$ $\ $and $\ V_{Q}$ are called scalar part
and vector part of the dual quaternion $Q$, respectively.

If two dual quaternions are $Q=q+\varepsilon q^{\ast }$ \ and \ $%
P=p+\varepsilon p^{\ast }$, then the addition and subtraction is given by%
\begin{equation*}
Q\mp P=(q\mp p)+\varepsilon (q^{\ast }\mp p^{\ast })
\end{equation*}%
and multiplication is%
\begin{equation*}
Q.P=q.p+\varepsilon (q.p^{\ast }+q^{\ast }.p)
\end{equation*}%
where $q=q_{0}+iq_{1}+jq_{2}+kq_{3}$ , \ $q^{\ast }=q_{0}^{\ast
}+iq_{1}^{\ast }+jq_{2}^{\ast }+kq_{3}^{\ast }$ \ and \ $%
p=p_{0}+ip_{1}+jp_{2}+kp_{3}$ , \ $p^{\ast }=p_{0}^{\ast }+ip_{1}^{\ast
}+jp+kp_{3}^{\ast }.$

The conjugate of the dual quaternion $Q=q+\varepsilon q^{\ast }$ is defined
as;%
\begin{eqnarray*}
\overline{Q} &=&\overline{q}+\varepsilon \overline{q^{\ast }} \\
&=&(q_{0}+\varepsilon q_{0}^{\ast })-i(q_{1}+\varepsilon q_{1}^{\ast
})-j(q_{2}+\varepsilon q_{2}^{\ast })-k(q_{3}+\varepsilon q_{3}^{\ast }).
\end{eqnarray*}%
If two quaternions are $Q$ \ and $P$, then%
\begin{eqnarray*}
\overline{\overline{Q}} &=&Q \\
\overline{(QP)} &=&\overline{P}\text{ }\overline{Q}.
\end{eqnarray*}%
$N_{Q}$ is called the norm of $Q$ and given by%
\begin{equation*}
N_{Q}=\left\Vert Q\right\Vert =Q\overline{Q}=A^{2}+B^{2}+C^{2}+D^{2}
\end{equation*}%
where $A=q_{0}+\varepsilon q_{0}^{\ast }$ \ , \ $B=q_{1}+\varepsilon
q_{1}^{\ast }$ \ , \ $C=q_{2}+\varepsilon q_{2}^{\ast }$ \ , \ $%
D=q_{3}+\varepsilon q_{3}^{\ast }$. Also the dual number $N_{Q}$ is called
the magnitude of the dual quaternion $Q.$

The dual quaternion with $N_{Q}=1$ is called a unit dual quaternion.

The inverse of $Q$ is%
\begin{equation*}
Q^{-1}=\frac{\overline{Q}}{N_{Q}}=\frac{\overline{Q}}{Q\overline{Q}}.
\end{equation*}%
From these notations it can be said that the above properties are dual
version of quaternions.

There are many works on Fibonacci and Lucas numbers. Dunlap \cite{Dun} ,
Vajda \cite{Vajda}, Verner \cite{Verner} and Hoggatt \cite{Verner} explained
the properties of Fibonacci and Lucas numbers and computed the relations
between them.

Horadam defined the generalized Fibonacci sequences in \cite{Hor}. Then the $%
n^{th}$ Fibonacci and $n^{th}$ Lucas quaternions were described by Horadam
in \cite{Hor2} as%
\begin{equation}
Q_{n}=F_{n}+iF_{n+1}+jF_{n+2}+kF_{n+3}  \tag{1.2}
\end{equation}%
and%
\begin{equation}
K_{n}=L_{n}+iL_{n+1}+jL_{n+2}+kL_{n+3}  \tag{1.3}
\end{equation}%
respectively, where $F_{n}$ is Fibonacci number and $L_{n}$ is Lucas number.
Also $i,j,k$ are standard orthonormal basis in $\mathbb{R}^{3}.$

Swamy \cite{Swamy} gave the relations of generalized Fibonacci quaternions.
Iyer studied Fibonacci quaternions in \cite{Iyer} and obtained some other
relations about Fibonacci and Lucas quaternions.

In \cite{Halici}, Hal\i c\i\ expressed the generating function and Binet
formulas for these quaternions. Akyi\u{g}it, K\"{o}sal and Tosun \cite{Ak}
defined the split Fibonacci and split Lucas quaternions.They also gave Binet
formulas and Cassini identities for these quaternions.

In this paper, we define the dual Fibonacci quaternion and the dual Lucas
quaternion by combining Fibonacci, Lucas quaternions and dual quaternions.
We find the equations between the given quaternions and give the Binet and
Cassini formulas for them.

\section{\textbf{Dual Fibonacci Quaternions}}

Complex Fibonacci numbers are given in \cite{Hor2} by Horadam as;%
\begin{equation*}
C_{n}=F_{n}+iF_{n+1}\text{ \ , \ \ \ }i^{2}=-1
\end{equation*}%
where $F_{n}$ is the $n^{th}$ Fibonacci number.

Also Hal\i c\i\ \cite{Halici2} described the $n^{th}$ complex Fibonacci
quaternion as follows;%
\begin{equation*}
R_{n}=Q_{n}+iQ_{n+1}\text{ \ , \ \ \ }i^{2}=-1
\end{equation*}%
where $Q_{n}=F_{n}+iF_{n+1}+jF_{n+2}+kF_{n+3}$ is the $n^{th}$ Fibonacci
quaternion.

With the same logic we can define dual Fibonacci, Lucas numbers and dual
Fibonacci, Lucas quaternions.

The $n^{th}$ \textit{dual Fibonacci and }$n^{th}$ \textit{dual Lucas numbers}
are defined by 
\begin{eqnarray}
\widetilde{F_{n}} &=&F_{n}+\varepsilon F_{n+1}  \TCItag{2.1} \\
\widetilde{L_{n}} &=&L_{n}+\varepsilon L_{n+1}  \TCItag{2.2}
\end{eqnarray}%
respectively, where $\varepsilon $ is the dual unit and $\ \varepsilon
^{2}=0 $ $,$ $\varepsilon \neq 0.$ Here $F_{n}$ is the $n^{th}$ Fibonacci
number and $L_{n}$ is the $n^{th}$ Lucas number.

The $n^{th}$ \textit{dual Fibonacci quaternion and }$n^{th}$ \textit{dual
Lucas quaternions }are defined as;%
\begin{eqnarray}
\widetilde{Q_{n}} &=&Q_{n}+\varepsilon Q_{n+1}  \TCItag{2.3} \\
\widetilde{K_{n}} &=&K_{n}+\varepsilon K_{n+1}  \TCItag{2.4}
\end{eqnarray}%
respectively. Here $Q_{n}=F_{n}+iF_{n+1}+jF_{n+2}+kF_{n+3}$ is the $n^{th}$
Fibonacci quaternion and $K_{n}=L_{n}+iL_{n+1}+jL_{n+2}+kL_{n+3}$ is the $%
n^{th}$ Lucas quaternion. $i,j,k$ are quaternion units or standard
orthonormal basis in $\mathbb{R}^{3}$ which satisfy the following rules;%
\begin{eqnarray*}
i^{2} &=&j^{2}=k^{2}=-1 \\
ij &=&-ji=k\text{ \ , \ \ }jk=-kj=i\text{ \ , \ \ }ki=-ik=j.
\end{eqnarray*}%
The dual Fibonacci quaternion $\ \widetilde{Q_{n}}$ \ consists four dual
elements and can be represented as%
\begin{equation*}
\widetilde{Q_{n}}=(F_{n}+\varepsilon F_{n+1})+i(F_{n+1}+\varepsilon
F_{n+2})+j(F_{n+2}+\varepsilon F_{n+3})+k(F_{n+3}+\varepsilon F_{n+4})
\end{equation*}%
By using dual Fibonacci numbers we can get%
\begin{equation*}
\widetilde{Q_{n}}=\widetilde{F_{n}}+i\widetilde{F}_{n+1}+j\widetilde{F}%
_{n+2}+k\widetilde{F}_{n+3}.
\end{equation*}%
The scalar part and vector part of the dual Fibonacci quaternion $\widetilde{%
Q_{n}}$ are given by%
\begin{eqnarray*}
S_{\widetilde{Q_{n}}} &=&F_{n}+\varepsilon F_{n+1} \\
V_{\widetilde{Q_{n}}} &=&i(F_{n+1}+\varepsilon
F_{n+2})+j(F_{n+2}+\varepsilon F_{n+3})+k(F_{n+3}+\varepsilon F_{n+4})
\end{eqnarray*}%
respectively.

Let $\widetilde{Q_{n}}=Q_{n}+\varepsilon Q_{n+1}$ \ and \ $\widetilde{P_{n}}%
=P_{n}+\varepsilon P_{n+1}$ \ be two dual Fibonacci quaternions. The
addition and subtraction of them is \ given by%
\begin{equation}
\widetilde{Q_{n}}\mp \widetilde{P_{n}}=(Q_{n}\mp P_{n})+\varepsilon
(Q_{n+1}\mp P_{n+1})  \tag{2.5}
\end{equation}%
and multiplication is 
\begin{equation}
\widetilde{Q_{n}}\widetilde{P_{n}}=Q_{n}P_{n}+\varepsilon
(Q_{n}P_{n+1}+Q_{n+1}P_{n})  \tag{2.6}
\end{equation}%
where $Q_{n}=F_{n}+iF_{n+1}+jF_{n+2}+kF_{n+3}$ \ , \ \ $%
Q_{n+1}=F_{n+1}+iF_{n+2}+jF_{n+3}+kF_{n+4}$ \ and \ $%
P_{n}=X_{n}+iX_{n+1}+jX_{n+2}+kX_{n+3}$ \ , \ \ $%
P_{n+1}=X_{n+1}+iX_{n+2}+jX_{n+3}+kX_{n+4}$ \ are Fibonacci quaternions.

The conjugate of the dual Fibonacci quaternion $\widetilde{Q_{n}}$ is
defined by%
\begin{eqnarray*}
\overline{\widetilde{Q_{n}}} &=&S_{\widetilde{Q_{n}}}-V_{\widetilde{Q_{n}}}
\\
&=&F_{n}+\varepsilon F_{n+1}-i(F_{n+1}+\varepsilon
F_{n+2})-j(F_{n+2}+\varepsilon F_{n+3})-k(F_{n+3}+\varepsilon F_{n+4}).
\end{eqnarray*}%
Also the norm of $\widetilde{Q_{n}}$ can be given as%
\begin{equation*}
N_{\widetilde{Q_{n}}}=\left\Vert \widetilde{Q_{n}}\right\Vert =\widetilde{%
Q_{n}}\overline{\widetilde{Q_{n}}}=A^{2}+B^{2}+C^{2}+D^{2}
\end{equation*}%
where $A=F_{n}+\varepsilon F_{n+1}$ \ , \ $B=F_{n+1}+\varepsilon F_{n+2}$ \
, \ $C=F_{n+2}+\varepsilon F_{n+3}$ \ , \ $D=F_{n+3}+\varepsilon F_{n+4}.$

Additionally, after using the properties of Fibonacci numbers we can write
the norm%
\begin{equation}
N_{\widetilde{Q_{n}}}=F_{2n+1}+F_{2n+5}+2\varepsilon (F_{2n+2}+F_{2n+6}). 
\tag{2.7}
\end{equation}%
If $N_{\widetilde{Q_{n}}}=1$, then $\widetilde{Q_{n}}$ \ is called unit dual
Fibonacci quaternion.

The inverse of $\widetilde{Q_{n}}$ \ can be computed by%
\begin{equation*}
\widetilde{Q_{n}}^{-1}=\frac{\overline{\widetilde{Q_{n}}}}{N_{\widetilde{%
Q_{n}}}}=\frac{\overline{\widetilde{Q_{n}}}}{\widetilde{Q_{n}}\overline{%
\widetilde{Q_{n}}}}.
\end{equation*}%
After above notations now we will state the theorems.

\begin{theorem}
Let $\widetilde{L_{n}}$ and $\widetilde{Q_{n}}$ be a dual Lucas number and a
dual Fibonacci quaternion, respectively. For $n\geq 1$ , the following
relations hold:%
\begin{equation*}
\begin{array}{l}
1)\text{ \ }\widetilde{Q_{n}}+\widetilde{Q}_{n+1}=\widetilde{Q}_{_{n+2}} \\ 
2)\text{ \ }\widetilde{Q_{n}}-i\widetilde{Q}_{n+1}-j\widetilde{Q}_{n+2}-k%
\widetilde{Q}_{_{n+3}}=\widetilde{L}_{n+3} \\ 
3)\text{ \ }\widetilde{Q_{n}}\widetilde{Q_{m}}+\widetilde{Q}_{n+1}\widetilde{%
Q}_{m+1}=-\left( \widetilde{L}_{_{n+m+2}}+\widetilde{L}_{n+m+6}\right) +2\ 
\widetilde{Q}_{n+m+1} \\ 
\text{ \ \ \ \ \ \ \ \ \ \ \ \ \ \ \ \ \ \ \ \ \ \ \ \ \ \ \ \ \ \ \ \ \ \ \ 
}+\varepsilon \left( -L_{n+m+3}-L_{n+m+7}+2Q_{n+m+2}\right)%
\end{array}%
\end{equation*}
\end{theorem}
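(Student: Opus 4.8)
The plan is to reduce every assertion to computations with ordinary Fibonacci and Lucas quaternions by exploiting the dual structure. Writing each factor as $\widetilde{Q_n}=Q_n+\varepsilon Q_{n+1}$ and using $\varepsilon^2=0$, every stated identity splits into a ``real'' part (the $\varepsilon^0$ coefficient) and an ``$\varepsilon$'' part, each of which is an identity among ordinary quaternions. Throughout I would lean on three standard facts: the Fibonacci recurrence lifts componentwise to $Q_n+Q_{n+1}=Q_{n+2}$; the Fibonacci--Lucas relation $F_p+F_{p+2}=L_{p+1}$; and the product identities $F_aF_b+F_{a+1}F_{b+1}=F_{a+b+1}$ and $5F_aF_b=L_{a+b}-(-1)^bL_{a-b}$.

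For part (1) I would expand $\widetilde{Q_n}+\widetilde{Q}_{n+1}=(Q_n+Q_{n+1})+\varepsilon(Q_{n+1}+Q_{n+2})$ and apply $Q_n+Q_{n+1}=Q_{n+2}$ to each part, giving $Q_{n+2}+\varepsilon Q_{n+3}=\widetilde{Q}_{n+2}$. For part (2) the left side splits as $[Q_n-iQ_{n+1}-jQ_{n+2}-kQ_{n+3}]+\varepsilon[Q_{n+1}-iQ_{n+2}-jQ_{n+3}-kQ_{n+4}]$. I would evaluate the generic bracket $Q_m-iQ_{m+1}-jQ_{m+2}-kQ_{m+3}$ directly from the multiplication table; upon collecting the $i$, $j$, $k$ coefficients these cancel in pairs, leaving the pure scalar $F_m+F_{m+2}+F_{m+4}+F_{m+6}$, which I then rewrite in Lucas form via $F_p+F_{p+2}=L_{p+1}$. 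Recombining the real and $\varepsilon$ parts reassembles the dual Lucas number on the right.

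Part (3) is the substantive one. Applying the dual product rule (2.6) to each summand gives $\widetilde{Q_n}\widetilde{Q_m}+\widetilde{Q}_{n+1}\widetilde{Q}_{m+1}=\bigl(Q_nQ_m+Q_{n+1}Q_{m+1}\bigr)+\varepsilon\bigl(Q_nQ_{m+1}+Q_{n+1}Q_m+Q_{n+1}Q_{m+2}+Q_{n+2}Q_{m+1}\bigr)$. I would first establish the real-part identity $Q_nQ_m+Q_{n+1}Q_{m+1}=-(L_{n+m+2}+L_{n+m+6})+2Q_{n+m+1}$ as a standalone lemma, expanding both quaternion products with the full multiplication rule: the scalar component is a combination of products $F_aF_b$ that collapses through $F_aF_b+F_{a+1}F_{b+1}=F_{a+b+1}$ and $5F_aF_b=L_{a+b}-(-1)^bL_{a-b}$ into $-(L_{n+m+2}+L_{n+m+6})+2F_{n+m+1}$, while the $i$, $j$, $k$ components telescope to $2F_{n+m+2}$, $2F_{n+m+3}$, $2F_{n+m+4}$, i.e. to twice the vector part of $Q_{n+m+1}$. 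The $\varepsilon$-part is then the same lemma applied with shifted indices; one checks it equals \emph{twice} $-(L_{n+m+3}+L_{n+m+7})+2Q_{n+m+2}$. This matches the stated right-hand side exactly once $\widetilde L$ and $\widetilde Q$ are expanded: the dual terms $-(\widetilde L_{n+m+2}+\widetilde L_{n+m+6})+2\widetilde Q_{n+m+1}$ supply one copy of the $\varepsilon$-coefficient and the explicit $\varepsilon(\cdots)$ term supplies the second.

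The main obstacle is the product of two Fibonacci quaternions in part (3): each product distributes eight scalar-times-$F_aF_b$ contributions across the four quaternion units, and collapsing the resulting sums to the compact Lucas/Fibonacci form requires repeated, carefully indexed use of the product identities while tracking the sign pattern of the multiplication table. The bookkeeping over the four shifted products in the $\varepsilon$-part, and verifying that the factor of two in the $\varepsilon$-component comes out correctly, is where errors are most likely and where I would concentrate; by contrast parts (1) and (2) are routine once the dual split is made. I note that if a Binet-type formula for $\widetilde{Q_n}$ were available, part (3) would shorten to manipulating geometric factors $\alpha^{n+m}$, $\beta^{n+m}$, but since that formula is not yet established I would proceed by direct expansion.
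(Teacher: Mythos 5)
Parts (1) and (3) of your plan coincide with the paper's own proof: split each dual quaternion into its $\varepsilon^{0}$ and $\varepsilon^{1}$ components, use $Q_{n}+Q_{n+1}=Q_{n+2}$ for (1), and for (3) prove the lemma $Q_{n}Q_{m}+Q_{n+1}Q_{m+1}=-\left( L_{n+m+2}+L_{n+m+6}\right) +2Q_{n+m+1}$ by direct expansion and then apply it with the shifts $m\mapsto m+1$ and $n\mapsto n+1$ to obtain the factor of two in the $\varepsilon$-part. Your remark that the dual symbols $\widetilde{L}$ and $\widetilde{Q}$ on the right-hand side absorb one copy of the $\varepsilon$-coefficient while the explicit $\varepsilon(\cdots)$ term supplies the second is exactly how the paper closes that computation, so those two items are fine.

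Part (2), however, does not close as you describe it. Your direct expansion of the generic bracket is correct: the $i,j,k$ coefficients cancel in pairs and the scalar is $F_{m}+F_{m+2}+F_{m+4}+F_{m+6}$. But pairing these via $F_{p}+F_{p+2}=L_{p+1}$ gives $L_{m+1}+L_{m+5}$, and $L_{m+1}+L_{m+5}=3L_{m+3}$, not $L_{m+3}$ (check $m=0$: the scalar is $0+1+3+8=12=3L_{3}$, whereas $L_{3}=4$). So the final sentence of your part (2), that recombining the real and $\varepsilon$ parts ``reassembles the dual Lucas number on the right,'' is false: your computation actually produces $3\widetilde{L}_{n+3}$. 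The paper avoids the expansion entirely by quoting Iyer's identity in the form $Q_{n}-iQ_{n+1}-jQ_{n+2}-kQ_{n+3}=L_{n+3}$ and applying it to both components, but your (correct) intermediate computation shows that this cited form, and hence item (2) as stated, is off by a factor of $3$. Either way, the passage from the scalar $F_{m}+F_{m+2}+F_{m+4}+F_{m+6}$ to $\widetilde{L}_{n+3}$ is a genuine gap in your write-up; it cannot be repaired without inserting the factor $3$ on the right-hand side.
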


\begin{proof}
1) By using the equation (2.3) and (2.5), we get%
\begin{eqnarray*}
\widetilde{Q_{n}}+\widetilde{Q}_{n+1} &=&\left( Q_{n}+\varepsilon
Q_{n+1}\right) +\left( Q_{n+1}+\varepsilon Q_{n+2}\right) \\
&=&\left( Q_{n}+Q_{n+1}\right) +\varepsilon (Q_{n+1}+Q_{n+2}).
\end{eqnarray*}%
If we use the equation (1.2) and the identity of Fibonacci numbers that is $%
F_{n}=F_{n-1}+F_{n-2}$ \ , then the above equation becomes as%
\begin{eqnarray*}
\widetilde{Q_{n}}+\widetilde{Q}_{n+1}
&=&F_{n}+F_{n+1}+i(F_{n+1}+F_{n+2})+j(F_{n+2}+F_{n+3})+k(F_{n+3}+F_{n+4}) \\
&&+\varepsilon
(F_{n+1}+F_{n+2}+i(F_{n+2}+F_{n+3})+j(F_{n+3}+F_{n+4})+k(F_{n+4}+F_{n+5})) \\
&=&F_{n+2}+iF_{n+3}+jF_{n+4}+kF_{n+5} \\
&&+\varepsilon (F_{n+3}+iF_{n+4}+jF_{n+5}+kF_{n+6}) \\
&=&Q_{n+2}+\varepsilon Q_{n+3}.
\end{eqnarray*}%
From the definition of dual Fibonacci quaternions it results that%
\begin{equation*}
\widetilde{Q_{n}}+\widetilde{Q}_{n+1}=\widetilde{Q}_{n+2}.
\end{equation*}%
2) From the equation (2.3) and (2.4) , and the identity of Fibonacci
quaternions $Q_{n}-iQ_{n+1}-jQ_{n+2}-kQ_{n+3}=L_{n+3}$ which is given in
Iyer \cite{Iyer}, we find that%
\begin{eqnarray*}
\widetilde{Q_{n}}-i\widetilde{Q}_{n+1}-j\widetilde{Q}_{n+2}-k\widetilde{Q}%
_{n+3} &=&\left( Q_{n}+\varepsilon Q_{n+1}\right) -i\left(
Q_{n+1}+\varepsilon Q_{n+2}\right) \\
&&-j\left( Q_{n+2}+\varepsilon Q_{n+3}\right) -k\left( Q_{n+3}+\varepsilon
Q_{n+4}\right) \\
&=&\left( Q_{n}-iQ_{n+1}-jQ_{n+2}-kQ_{n+3}\right) \\
&&+\varepsilon \left( Q_{n+1}-iQ_{n+2}-jQ_{n+3}-kQ_{n+4}\right) \\
&=&L_{n+3}+\varepsilon L_{n+4} \\
&=&\widetilde{L}_{n+3}.
\end{eqnarray*}%
3) \ By using the equation (2.3) and (2.5), we simply get;%
\begin{eqnarray}
\widetilde{Q_{n}}\widetilde{Q_{m}}+\widetilde{Q}_{n+1}\widetilde{Q}_{m+1}
&=&\left( Q_{n}+\varepsilon Q_{n+1}\right) \left( Q_{m}+\varepsilon
Q_{m+1}\right) +\left( Q_{n+1}+\varepsilon Q_{n+2}\right) \left(
Q_{m+1}+\varepsilon Q_{m+2}\right)  \notag \\
&=&Q_{n}Q_{m}+Q_{n+1}Q_{m+1}  \notag \\
&&+\varepsilon \left[ (Q_{n}Q_{m+1}+Q_{n+1}Q_{m+2})+\left(
Q_{n+1}Q_{m}+Q_{n+2}Q_{m+1}\right) \right] .  \TCItag{2.8}
\end{eqnarray}%
Let us compute $Q_{n}Q_{m}+Q_{n+1}Q_{m+1}$ for Fibonacci quaternions to use
it in the equation by (1.2).%
\begin{eqnarray*}
Q_{n}Q_{m}+Q_{n+1}Q_{m+1} &=&\left( F_{n}+iF_{n+1}+jF_{n+2}+kF_{n+3}\right)
\left( F_{m}+iF_{m+1}+jF_{m+2}+kF_{m+3}\right) \\
&&+\left( F_{n+1}+iF_{n+2}+jF_{n+3}+kF_{n+4}\right) \left(
F_{m+1}+iF_{m+2}+jF_{m+3}+kF_{m+4}\right) \\
&=&F_{n+m+1}-F_{n+m+3}-F_{n+m+5}-F_{n+m+7} \\
&&+\left( 2F_{n+m+2}\right) i+\left( 2F_{n+m+3}\right) j+\left(
2F_{n+m+4}\right) k \\
&=&F_{n+m+1}-F_{n+m+3}-F_{n+m+5}-F_{n+m+7}-2F_{n+m+1} \\
&&+\left( 2F_{n+m+1}\right) +\left( 2F_{n+m+2}\right) i+\left(
2F_{n+m+3}\right) j+\left( 2F_{n+m+4}\right) k \\
&=&-\left( F_{n+m+1}+F_{n+m+3}+F_{n+m+5}+F_{n+m+7}\right) +2Q_{n+m+1} \\
&=&-\left( L_{n+m+2}+L_{n+m+6}\right) +2Q_{n+m+1}.
\end{eqnarray*}%
Thus we can write%
\begin{equation}
Q_{n}Q_{m+1}+Q_{n+1}Q_{m+2}=-\left( L_{n+m+3}+L_{n+m+7}\right) +2Q_{n+m+2} 
\tag{2.9}
\end{equation}%
and%
\begin{equation}
Q_{n+1}Q_{m}+Q_{n+2}Q_{m+1}=-\left( L_{n+m+3}+L_{n+m+7}\right) +2Q_{n+m+2}. 
\tag{2.10}
\end{equation}%
Putting the equations (2.9) and (2.10) in (2.8), we obtain the result as;%
\begin{eqnarray*}
\widetilde{Q_{n}}\widetilde{Q_{m}}+\widetilde{Q}_{n+1}\widetilde{Q}_{m+1}
&=&-\left( L_{n+m+2}+L_{n+m+6}\right) +2Q_{n+m+1} \\
&&+2\varepsilon \left[ -\left( L_{n+m+3}+L_{n+m+7}\right) +2Q_{n+m+2}\right]
\\
&=&-\left( \widetilde{L}_{n+m+2}+\widetilde{L}_{n+m+6}\right) +2\ \widetilde{%
Q}_{n+m+1} \\
&&+\varepsilon \left( -L_{n+m+3}-L_{n+m+7}+2Q_{n+m+2}\right) .
\end{eqnarray*}
\end{proof}

\begin{theorem}
Let $\widetilde{Q_{n}}$ and $\widetilde{K_{n}}$ be a dual Fibonacci
quaternion and a dual Lucas quaternion, respectively. For $n\geq 1$ , the
following relations hold:%
\begin{equation*}
\begin{array}{l}
1)\text{ \ }\widetilde{Q}_{n-1}+\widetilde{Q}_{n+1}=\widetilde{K_{n}} \\ 
2)\text{ \ }\widetilde{Q}_{n+2}-\widetilde{Q}_{n-2}=\widetilde{K_{n}}%
\end{array}%
\end{equation*}
\end{theorem}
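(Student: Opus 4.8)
The plan is to reduce both identities to the corresponding statements for ordinary Fibonacci and Lucas quaternions, exploiting the fact that both sides are built additively from the dual decomposition (2.3) and that addition and subtraction of dual Fibonacci quaternions act componentwise by (2.5).

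For part 1), I would first expand using (2.3) and then collect the real and dual parts via (2.5):
\begin{equation*}
\widetilde{Q}_{n-1}+\widetilde{Q}_{n+1}=(Q_{n-1}+\varepsilon Q_{n})+(Q_{n+1}+\varepsilon Q_{n+2})=(Q_{n-1}+Q_{n+1})+\varepsilon (Q_{n}+Q_{n+2}).
\end{equation*}
The task then reduces to the quaternion identity $Q_{n-1}+Q_{n+1}=K_{n}$. Writing out each component by (1.2) and invoking the classical Lucas identity $F_{n-1}+F_{n+1}=L_{n}$ gives $Q_{n-1}+Q_{n+1}=K_{n}$; shifting the index by one yields $Q_{n}+Q_{n+2}=K_{n+1}$. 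Recombining through the definition (2.4) then gives $\widetilde{Q}_{n-1}+\widetilde{Q}_{n+1}=K_{n}+\varepsilon K_{n+1}=\widetilde{K_{n}}$.

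For part 2), the same strategy applies with subtraction:
\begin{equation*}
\widetilde{Q}_{n+2}-\widetilde{Q}_{n-2}=(Q_{n+2}-Q_{n-2})+\varepsilon (Q_{n+3}-Q_{n-1}),
\end{equation*}
so everything hinges on the quaternion identity $Q_{n+2}-Q_{n-2}=K_{n}$, which in turn rests on the scalar identity $F_{n+2}-F_{n-2}=L_{n}$. This last identity is the one small computation worth checking: using $F_{n+2}=F_{n+1}+F_{n}$ and $F_{n-2}=F_{n}-F_{n-1}$ one finds $F_{n+2}-F_{n-2}=F_{n+1}+F_{n-1}=L_{n}$, and applying it componentwise gives $Q_{n+2}-Q_{n-2}=K_{n}$ and, after an index shift, $Q_{n+3}-Q_{n-1}=K_{n+1}$. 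Reassembling via (2.4) completes the proof.

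The computations here are entirely routine, so there is no serious obstacle; the only points requiring care are the correct scalar identities ($F_{n-1}+F_{n+1}=L_{n}$ and $F_{n+2}-F_{n-2}=L_{n}$) and, for part 2), the appearance of $\widetilde{Q}_{n-2}$, which for small $n$ forces the use of Fibonacci and Lucas numbers with negative indices; these are well defined by the usual backward extension of the recurrence, so the identities remain valid in that range.
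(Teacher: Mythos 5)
Your proposal is correct and follows essentially the same route as the paper: expand via (2.3) and (2.5) into real and dual parts, reduce to the quaternion identities $Q_{n-1}+Q_{n+1}=K_{n}$ and $Q_{n+2}-Q_{n-2}=K_{n}$, and verify these componentwise from the scalar identities $F_{n-1}+F_{n+1}=L_{n}$ and $F_{n+2}-F_{n-2}=L_{n}$. Your added remark about negative-index Fibonacci numbers for small $n$ in part 2) is a point the paper passes over silently, but it does not change the argument.
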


\begin{proof}
1) From the equations (2.3) and (2.5), we get%
\begin{eqnarray*}
\widetilde{Q}_{n-1}+\widetilde{Q}_{n+1} &=&\left( Q_{n-1}+\varepsilon
Q_{n}\right) +\left( Q_{n+1}+\varepsilon Q_{n+2}\right) \\
&=&Q_{n-1}+Q_{n+1}+\varepsilon (Q_{n}+Q_{n+2}).
\end{eqnarray*}%
By using the equations (1.2), (1.3) and the relation between Fibonacci
numbers and Lucas numbers $L_{n}=F_{n-1}+F_{n+1}$ \ (see Vajda \cite{Vajda}
), the equation becomes%
\begin{eqnarray*}
\widetilde{Q}_{n-1}+\widetilde{Q}_{n+1}
&=&F_{n-1}+F_{n+1}+i(F_{n}+F_{n+2})+j(F_{n+1}+F_{n+3})+k(F_{n+2}+F_{n+4}) \\
&&+\varepsilon
(F_{n}+F_{n+2}+i(F_{n+1}+F_{n+3})+j(F_{n+2}+F_{n+4})+k(F_{n+3}+F_{n+5})) \\
&=&L_{n}+iL_{n+1}+jL_{n+2}+kL_{n+3} \\
&&+\varepsilon (L_{n+1}+iL_{n+2}+jL_{n+3}+kL_{n+4}) \\
&=&K_{n}+\varepsilon K_{n+1}
\end{eqnarray*}%
If we use the definition of dual Lucas quaternion, the last equation
completed the proof as%
\begin{equation*}
\widetilde{Q}_{n-1}+\widetilde{Q}_{n+1}=\widetilde{K_{n}}.
\end{equation*}%
2) Similarly in \ proof 1), we can find that%
\begin{eqnarray*}
\widetilde{Q}_{n+2}-\widetilde{Q}_{n-2} &=&\left( Q_{n+2}+\varepsilon
Q_{n+3}\right) -\left( Q_{n-2}+\varepsilon Q_{n-1}\right) \\
&=&Q_{n+2}-Q_{n-2}+\varepsilon (Q_{n+3}-Q_{n-1}) \\
&=&F_{n+2}-F_{n-2}+i(F_{n+3}-F_{n-1})+j(F_{n+4}-F_{n})+k(F_{n+5}-F_{n+1}) \\
&&+\varepsilon
(F_{n+3}-F_{n-1}+i(F_{n+4}-F_{n})+j(F_{n+5}-F_{n+1})+k(F_{n+6}-F_{n+2})).
\end{eqnarray*}%
Using the identity $L_{n}=F_{n+2}-F_{n-2}$ \ (see Vajda \cite{Vajda}) and
the equations (1.3), (2.4), it results as%
\begin{equation*}
\widetilde{Q}_{n+2}-\widetilde{Q}_{n-2}=\widetilde{K_{n}}
\end{equation*}
\end{proof}

\begin{theorem}
Let $\widetilde{Q_{n}}$ be a dual Fibonacci quaternion, $\overline{%
\widetilde{Q_{n}}}$ be conjugate of $\widetilde{Q_{n}}$ , $\widetilde{F_{n}}$
be a dual Fibonacci number and $\widetilde{L_{n}}$ be a dual Lucas number.
Then the following equations can be given:%
\begin{equation*}
\begin{array}{l}
1)\text{ \ }\widetilde{Q_{n}}\overline{\widetilde{Q_{n}}}=3(\widetilde{F}%
_{2n+3}+\varepsilon F_{2n+4}) \\ 
\\ 
2)\text{ \ }\widetilde{Q_{n}}+\overline{\widetilde{Q_{n}}}=2\widetilde{F}_{n}
\\ 
\\ 
3)\text{ \ }\widetilde{Q_{n}}^{2}=2\widetilde{Q_{n}}\widetilde{F}_{n}-3(%
\widetilde{F}_{2n+3}+\varepsilon F_{2n+4}) \\ 
\\ 
4)\text{ \ }\widetilde{Q_{n}}\overline{\widetilde{Q_{n}}}+\text{\ }%
\widetilde{Q}_{n-1}\overline{\widetilde{Q}}_{n-1}=3(\widetilde{L}%
_{2n+2}+\varepsilon L_{2n+3}) \\ 
\\ 
5)\text{ \ }\widetilde{Q_{n}}^{2}+\widetilde{Q}_{n-1}^{2}=2\widetilde{Q}%
_{2n-1}-3\widetilde{L}_{2n+2}+\varepsilon (2Q_{2n}-3L_{2n+3})%
\end{array}%
\end{equation*}
\end{theorem}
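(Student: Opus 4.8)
The plan is to establish the five identities in order, using the earlier parts as they are proved, and to reduce every computation to a small set of standard Fibonacci identities: the addition formula $F_{a+b+1}=F_{a}F_{b}+F_{a+1}F_{b+1}$ (whose diagonal $a=b=m$ gives $F_{2m+1}=F_{m}^{2}+F_{m+1}^{2}$), the doubling formula $F_{2m}=F_{m}L_{m}$, and the Fibonacci--Lucas links $L_{m}=F_{m-1}+F_{m+1}$ and $F_{m-2}+F_{m+2}=3F_{m}$.

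For 1) I would start from the norm expression (2.7), $\widetilde{Q_{n}}\overline{\widetilde{Q_{n}}}=N_{\widetilde{Q_{n}}}=F_{2n+1}+F_{2n+5}+2\varepsilon (F_{2n+2}+F_{2n+6})$, and apply $F_{m-2}+F_{m+2}=3F_{m}$ to the real part ($F_{2n+1}+F_{2n+5}=3F_{2n+3}$) and, with the index shifted by one, to the dual part ($F_{2n+2}+F_{2n+6}=3F_{2n+4}$). Since $3(\widetilde{F}_{2n+3}+\varepsilon F_{2n+4})=3F_{2n+3}+6\varepsilon F_{2n+4}$, the two sides coincide. One could instead expand $\widetilde{Q_{n}}\overline{\widetilde{Q_{n}}}=Q_{n}\overline{Q_{n}}+\varepsilon (Q_{n}\overline{Q_{n+1}}+Q_{n+1}\overline{Q_{n}})$ and use $Q_{n}\overline{Q_{n}}=3F_{2n+3}$ together with the fact that $Q_{n}\overline{Q_{n+1}}+Q_{n+1}\overline{Q_{n}}$ equals twice the sum of products of corresponding coefficients, which collapses to $6F_{2n+4}$. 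Part 2) is immediate from $\widetilde{Q_{n}}=S_{\widetilde{Q_{n}}}+V_{\widetilde{Q_{n}}}$ and $\overline{\widetilde{Q_{n}}}=S_{\widetilde{Q_{n}}}-V_{\widetilde{Q_{n}}}$: the vector parts cancel and $S_{\widetilde{Q_{n}}}=F_{n}+\varepsilon F_{n+1}=\widetilde{F}_{n}$.

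Part 3) follows by combining 1) and 2). Because $\widetilde{F}_{n}$ is a dual scalar and hence commutes with every dual quaternion, part 2) gives $\overline{\widetilde{Q_{n}}}=2\widetilde{F}_{n}-\widetilde{Q_{n}}$, so $\widetilde{Q_{n}}\overline{\widetilde{Q_{n}}}=2\widetilde{Q_{n}}\widetilde{F}_{n}-\widetilde{Q_{n}}^{2}$; solving for $\widetilde{Q_{n}}^{2}$ and inserting 1) yields the claim. For 4) I would apply 1) at $n$ and at $n-1$ and add, obtaining $3(\widetilde{F}_{2n+3}+\widetilde{F}_{2n+1}+\varepsilon (F_{2n+4}+F_{2n+2}))$; then $\widetilde{F}_{2n+3}+\widetilde{F}_{2n+1}=\widetilde{L}_{2n+2}$ and $F_{2n+4}+F_{2n+2}=L_{2n+3}$, both via $L_{m}=F_{m-1}+F_{m+1}$, give the stated right-hand side.

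The fifth identity is where the real work lies. Applying 3) at $n$ and $n-1$ and adding gives $\widetilde{Q_{n}}^{2}+\widetilde{Q}_{n-1}^{2}=2(\widetilde{Q_{n}}\widetilde{F}_{n}+\widetilde{Q}_{n-1}\widetilde{F}_{n-1})-3(\widetilde{F}_{2n+3}+\widetilde{F}_{2n+1})-3\varepsilon (F_{2n+4}+F_{2n+2})$, and the last two terms collapse to $-3\widetilde{L}_{2n+2}-3\varepsilon L_{2n+3}$ exactly as in 4). The obstacle is the cross term $\widetilde{Q_{n}}\widetilde{F}_{n}+\widetilde{Q}_{n-1}\widetilde{F}_{n-1}$, which I claim equals $\widetilde{Q}_{2n-1}+\varepsilon Q_{2n}$. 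To prove this I would expand each product (using that $\widetilde{F}_{n}$ is scalar, so $\widetilde{Q_{n}}\widetilde{F}_{n}=Q_{n}F_{n}+\varepsilon (Q_{n}F_{n+1}+Q_{n+1}F_{n})$), collect the real part as $Q_{n}F_{n}+Q_{n-1}F_{n-1}$ and the dual part as $Q_{n}F_{n+1}+Q_{n+1}F_{n}+Q_{n-1}F_{n}+Q_{n}F_{n-1}$, and then verify componentwise that the real part equals $Q_{2n-1}$ and the dual part equals $2Q_{2n}$. Each of the eight coefficients is a sum of two Fibonacci products that collapses through $F_{a+b+1}=F_{a}F_{b}+F_{a+1}F_{b+1}$, with the scalar of the dual part additionally needing $F_{m}L_{m}=F_{2m}$. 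Hence the cross term is $Q_{2n-1}+2\varepsilon Q_{2n}=\widetilde{Q}_{2n-1}+\varepsilon Q_{2n}$ (since $\widetilde{Q}_{2n-1}=Q_{2n-1}+\varepsilon Q_{2n}$), its double is $2\widetilde{Q}_{2n-1}+2\varepsilon Q_{2n}$, and assembling everything produces $2\widetilde{Q}_{2n-1}-3\widetilde{L}_{2n+2}+\varepsilon (2Q_{2n}-3L_{2n+3})$, as required. The careful bookkeeping of these Fibonacci-product reductions in the cross term is the only delicate point; the rest is routine.
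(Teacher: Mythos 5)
Your proposal is correct, and parts 1) through 4) follow essentially the paper's own route: 1) from the norm formula (2.7) plus $F_{m-2}+F_{m+2}=3F_{m}$, 2) by cancellation of vector parts, 3) by substituting $\overline{\widetilde{Q_{n}}}=2\widetilde{F}_{n}-\widetilde{Q_{n}}$ into $\widetilde{Q_{n}}\overline{\widetilde{Q_{n}}}$, and 4) by adding 1) at $n$ and $n-1$. Where you genuinely diverge is 5). The paper splits $\widetilde{Q_{n}}^{2}+\widetilde{Q}_{n-1}^{2}=Q_{n}^{2}+Q_{n-1}^{2}+2\varepsilon\left(Q_{n-1}Q_{n}+Q_{n}Q_{n+1}\right)$, imports Swamy's identity $Q_{n}^{2}+Q_{n-1}^{2}=2Q_{2n-1}-3L_{2n+2}$ for the real part, and then grinds out the full noncommutative quaternion product $Q_{n-1}Q_{n}+Q_{n}Q_{n+1}=-3F_{2n+3}-6F_{2n+2}+2Q_{2n}=2Q_{2n}-3L_{2n+3}$ for the dual part. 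You instead bootstrap entirely from part 3), which reduces everything to the cross term $\widetilde{Q_{n}}\widetilde{F}_{n}+\widetilde{Q}_{n-1}\widetilde{F}_{n-1}$; since $\widetilde{F}_{n}$ is a central dual scalar, this involves only scalar-times-quaternion products, and your claimed evaluation $Q_{n}F_{n}+Q_{n-1}F_{n-1}=Q_{2n-1}$ together with a dual part equal to $2Q_{2n}$ checks out (each coefficient of the real part collapses by one application of $F_{a}F_{b}+F_{a+1}F_{b+1}=F_{a+b+1}$ or $F_{m}L_{m}=F_{2m}$; each coefficient of the dual part is a sum of \emph{four} products needing two such applications, or equivalently $F_{m}L_{n}+F_{n}L_{m}=2F_{m+n}$ --- a small understatement in your write-up but not a gap). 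Your route buys self-containedness (no appeal to Swamy) and avoids the error-prone quaternion multiplication; the paper's route buys a shorter dual-part bookkeeping at the price of an external identity and a heavier computation. Both arrive at the same answer, and indeed your version makes it easier to see why the final displayed line of the paper's own proof drops a factor of $2$ on $\widetilde{Q}_{2n-1}$ (a typo there, not in the theorem statement).
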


\begin{proof}
1) \ From the equation (2.1) , (2.7) and the identitiy $%
F_{n}=F_{n-1}+F_{n-2} $, we clearly get%
\begin{eqnarray*}
\widetilde{Q_{n}}\overline{\widetilde{Q_{n}}} &=&F_{2n+1}+F_{2n+5}+2%
\varepsilon (F_{2n+2}+F_{2n+6}) \\
&=&3(F_{2n+3}+2\varepsilon F_{2n+4}) \\
&=&3(F_{2n+3}+\varepsilon F_{2n+4}+\varepsilon F_{2n+4}) \\
&=&3(\widetilde{F}_{2n+3}+\varepsilon F_{2n+4}).
\end{eqnarray*}%
2) \ By using the equations (2.3), (2.5) and (2.7) \ we can compute;%
\begin{eqnarray*}
\widetilde{Q_{n}}+\overline{\widetilde{Q_{n}}} &=&2(F_{n}+\varepsilon
F_{n+1}) \\
&=&2\widetilde{F}_{n}.
\end{eqnarray*}%
3) \ We obtain the result by using the equations given in theorem 3
(identitiy 1) and 2)) as;%
\begin{eqnarray*}
\widetilde{Q_{n}}^{2} &=&\widetilde{Q_{n}}\widetilde{Q_{n}} \\
&=&\widetilde{Q_{n}}(2\widetilde{F}_{n}-\overline{\widetilde{Q_{n}}}) \\
&=&2\widetilde{Q_{n}}\widetilde{F}_{n}-\widetilde{Q_{n}}\overline{\widetilde{%
Q_{n}}} \\
&=&2\widetilde{Q_{n}}\widetilde{F}_{n}-3(\widetilde{F}_{2n+3}+\varepsilon
F_{2n+4}).
\end{eqnarray*}%
4) \ If we consider identity 1) in this theorem and $L_{n}=F_{n-1}+F_{n+1}$
\ (see Vajda \cite{Vajda} ), we can obtain the result;%
\begin{eqnarray*}
\widetilde{Q_{n}}\overline{\widetilde{Q_{n}}}+\text{\ }\widetilde{Q}_{n-1}%
\overline{\widetilde{Q}}_{n-1} &=&3(\widetilde{F}_{2n+3}+\varepsilon
F_{2n+4})+3(\widetilde{F}_{2n+1}+\varepsilon F_{2n+2}) \\
&=&3(\widetilde{F}_{2n+1}+\widetilde{F}_{2n+3}+\varepsilon
(F_{2n+2}+F_{2n+4})) \\
&=&3(\widetilde{L}_{2n+2}+\varepsilon L_{2n+3}).
\end{eqnarray*}%
5) \ From the equations (2.3) and (2.5), we have;%
\begin{equation}
\widetilde{Q_{n}}^{2}+\widetilde{Q}_{n-1}^{2}=Q_{n}^{2}+Q_{n-1}^{2}+2%
\varepsilon \left( Q_{n-1}Q_{n}+Q_{n}Q_{n+1}\right) .  \tag{2.11}
\end{equation}%
Here if we use the identity $Q_{n}^{2}+Q_{n-1}^{2}=2Q_{2n-1}-3L_{2n+2}$ (see
Swamy \cite{Swamy} ), we find;%
\begin{equation*}
\widetilde{Q_{n}}^{2}+\widetilde{Q}_{n-1}^{2}=2Q_{2n-1}-3L_{2n+2}+2%
\varepsilon \left( Q_{n-1}Q_{n}+Q_{n}Q_{n+1}\right) .
\end{equation*}%
Now by using $F_{n}F_{m}+F_{n+1}F_{m+1}=F_{n+m+1}$ (Vajda \cite{Vajda} ) and 
$F_{n}^{2}+F_{n+1}^{2}=F_{2n+1}$ (Vajda \cite{Vajda} ) , we get the
following equation;%
\begin{eqnarray*}
Q_{n-1}Q_{n}+Q_{n}Q_{n+1} &=&F_{n-1}F_{n}-F_{n+3}F_{n+4}-2F_{2n+4}+i\left(
2F_{2n+1}\right) \\
&&+j\left( 2F_{2n+2}\right) +k\left( 2F_{2n+3}\right) .
\end{eqnarray*}%
In this equation, we take into account that \ $F_{n}=F_{n-1}+F_{n-2}$ \ and
(1.2), thus we have;%
\begin{eqnarray*}
Q_{n-1}Q_{n}+Q_{n}Q_{n+1} &=&F_{n-1}F_{n}-F_{n+3}\left(
F_{n+3}+F_{n+2}\right) -2F_{2n+4}-2F_{2n}+2F_{2n} \\
&&+i\left( 2F_{2n+1}\right) +j\left( 2F_{2n+2}\right) +k\left(
2F_{2n+3}\right) \\
&=&F_{n-1}F_{n}-F_{n+3}^{2}-F_{n+3}F_{n+2} \\
&&-2\left( F_{2n+4}+F_{2n}\right) +2\left(
F_{2n}+iF_{2n+1}+jF_{2n+2}+kF_{2n+3}\right) \\
&=&F_{n-1}F_{n}-F_{n+3}^{2}-F_{n+2}^{2}-F_{n+1}^{2}-F_{n}^{2}-F_{n-1}F_{n} \\
&&-2\left( 3F_{2n+2}\right) +2\left( Q_{2n}\right) \\
&=&-\left( F_{n+3}^{2}+F_{n+2}^{2}\right) -\left(
F_{n+1}^{2}+F_{n}^{2}\right) -2\left( 3F_{2n+2}\right) +2\left( Q_{2n}\right)
\\
&=&-\left( F_{2n+5}+F_{n+1}\right) -2\left( 3F_{2n+2}\right) +2\left(
Q_{2n}\right) \\
&=&-3F_{2n+3}-6F_{2n+2}+2Q_{2n}
\end{eqnarray*}%
Putting this equation in (2.11) \ , using the identity $%
L_{n}=F_{n-1}+F_{n+1} $, the equations (2.3), (2.4) and composing the
expression , the result is obtained as;%
\begin{eqnarray*}
\widetilde{Q_{n}}^{2}+\widetilde{Q}_{n-1}^{2}
&=&2Q_{2n-1}-3L_{2n+2}+2\varepsilon \left(
-3F_{2n+3}-6F_{2n+2}+2Q_{2n}\right) \\
&=&2\left( Q_{2n-1}+\varepsilon Q_{2n}\right) +\varepsilon Q_{2n}-3L_{2n+2}
\\
&&+2\varepsilon \left[ -3\left( F_{2n+3}+2F_{2n+2}\right) +Q_{2n}\right] \\
&=&2\widetilde{Q}_{2n-1}-3\left( L_{2n+2}+\varepsilon L_{2n+3}\right)
+\varepsilon \left( 2Q_{2n}-3L_{2n+3}\right) \\
&=&\widetilde{Q}_{2n-1}-3\widetilde{L}_{2n+2}+\varepsilon \left(
2Q_{2n}-3L_{2n+3}\right) .
\end{eqnarray*}
\end{proof}

\begin{theorem}
Let $\widetilde{Q_{n}}$ be a dual Fibonacci quaternion. Then the following
summation formulas hold:%
\begin{equation*}
\begin{array}{l}
1)\text{ \ }\overset{n}{\underset{s=1}{\dsum }}\ \widetilde{Q_{s}}=\ 
\widetilde{Q}_{s+2}-\ \widetilde{Q_{2}} \\ 
\\ 
2)\text{ \ }\left( \overset{n}{\underset{s=0}{\dsum }}\ \widetilde{Q}%
_{n+s}\right) +\widetilde{Q}_{_{n+1}}=\ \widetilde{Q}_{n+p+2} \\ 
\\ 
3)\text{ \ }\overset{n}{\underset{s=1}{\dsum }}\ \widetilde{Q}_{2s-1}=\ 
\widetilde{Q_{2n}}-\widetilde{Q_{0}} \\ 
\\ 
4)\text{ \ }\overset{n}{\underset{s=1}{\dsum }}\ \widetilde{Q_{2s}}=\ 
\widetilde{Q}_{2n+1}-\widetilde{Q_{1}}%
\end{array}%
\end{equation*}
\end{theorem}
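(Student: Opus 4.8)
The plan is to recognize that all four identities are the dual-quaternion images of classical telescoping sums for Fibonacci numbers, so that each follows by pure linearity once the scalar version is in hand; no genuinely new quaternionic phenomenon is involved. (I read the right-hand side of $1)$ as $\widetilde{Q}_{n+2}$, since $s$ is the bound summation index, and I take the upper limit in $2)$ to be $p$ so as to match the $\widetilde{Q}_{n+p+2}$ on the right.)

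First I would record the scalar sums I need, all available from Vajda \cite{Vajda} and all invariant under a uniform index shift $F_{\bullet}\mapsto F_{\bullet+j}$:
\begin{gather*}
\sum_{s=1}^{n} F_{s+j}=F_{n+j+2}-F_{j+2},\qquad \sum_{s=1}^{n} F_{2s-1+j}=F_{2n+j}-F_{j},\\
\sum_{s=0}^{p} F_{n+s+j}=F_{n+p+j+2}-F_{n+j+1}.
\end{gather*}
Each is a one-line consequence of the telescoping relation $F_{k}=F_{k+2}-F_{k+1}$. Second I would lift these to the Fibonacci quaternion level. Writing $Q_{m}=F_{m}+iF_{m+1}+jF_{m+2}+kF_{m+3}$ and summing over the relevant progression applies the scalar identity simultaneously in the four slots $j=0,1,2,3$; reassembling the four results into a single quaternion gives
\begin{gather*}
\sum_{s=1}^{n} Q_{s}=Q_{n+2}-Q_{2},\qquad \sum_{s=1}^{n} Q_{2s-1}=Q_{2n}-Q_{0},\\
\sum_{s=1}^{n} Q_{2s}=Q_{2n+1}-Q_{1},\qquad \sum_{s=0}^{p} Q_{n+s}+Q_{n+1}=Q_{n+p+2}.
\end{gather*}
Here I would use the standard conventions $F_{0}=0$ and $Q_{0}=F_{0}+iF_{1}+jF_{2}+kF_{3}$, so that each subtracted endpoint is exactly the quaternion claimed.

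Third I would pass to dual Fibonacci quaternions. By $(2.3)$ each $\widetilde{Q}_{m}=Q_{m}+\varepsilon Q_{m+1}$, and by $(2.5)$ finite summation commutes with the $\varepsilon$-splitting, so
\begin{equation*}
\sum_{s=1}^{n} \widetilde{Q}_{s}=\Big(\sum_{s=1}^{n} Q_{s}\Big)+\varepsilon\Big(\sum_{s=1}^{n} Q_{s+1}\Big).
\end{equation*}
The first bracket is the quaternion identity above and the second is the same identity shifted by one index; recombining through $(2.3)$ produces $\widetilde{Q}_{n+2}-\widetilde{Q}_{2}$, which is $1)$. Identities $2)$–$4)$ come out by the identical two-term split, using the consecutive sum for $2)$ and the odd/even scalar sums for $3)$–$4)$.

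I do not expect a real obstacle here: the whole argument is linear telescoping. The only place demanding care is the index bookkeeping at the endpoints — making sure the uniform shifts $j=0,1,2,3$ line up across the four quaternion slots together with the extra $\varepsilon$-shift, and that the boundary value is read off at the correct index (for instance $\widetilde{Q}_{0}$ in $3)$ via $F_{0}=0$), so that the subtracted term assembles into a genuine single dual Fibonacci quaternion rather than a stray collection of components.
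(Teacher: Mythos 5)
Your proposal is correct and takes essentially the same route as the paper: both split each $\widetilde{Q}_m$ into $Q_m+\varepsilon Q_{m+1}$, apply telescoping Fibonacci-quaternion sums to the primary and dual parts separately, and reassemble via (2.3); you also rightly read $\widetilde{Q}_{s+2}$ as $\widetilde{Q}_{n+2}$ in 1) and the upper limit in 2) as $p$, which are typos the paper's own proof silently corrects. The only (immaterial) difference is that you derive the quaternion-level sums componentwise from the scalar identities, whereas the paper quotes $\sum_{s=1}^{n}Q_{s}=Q_{n+2}-Q_{2}$ from the literature for 1) and telescopes directly at the quaternion level, e.g.\ via $Q_{2s-1}=Q_{2s}-Q_{2s-2}$, for 3) and 4).
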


\begin{proof}
1) \ If we use the equation (2.3) and $\overset{n}{\underset{s=1}{\dsum }}%
Q_{s}=Q_{n+2}-Q_{2}$ (see Hal\i c\i ,\ \cite{Halici} ), we obtain the result;%
\begin{eqnarray*}
\overset{n}{\underset{s=1}{\dsum }}\ \widetilde{Q_{s}} &=&\overset{n}{%
\underset{s=1}{\dsum }}\ \left( Q_{s}+\varepsilon Q_{s+1}\right) \\
&=&\overset{n}{\underset{s=1}{\dsum }}Q_{s}+\varepsilon \overset{n}{\underset%
{s=1}{\dsum }}Q_{s+1} \\
&=&Q_{n+2}-Q_{2}+\varepsilon \left( Q_{n+3}-Q_{3}\right) \\
&=&\widetilde{Q}_{n+2}-\widetilde{Q_{2}}.
\end{eqnarray*}%
2) \ Since 
\begin{equation*}
\overset{n}{\underset{s=0}{\dsum }}Q_{n+s}=\overset{n+p}{\underset{r=1}{%
\dsum }}Q_{r}-\overset{n-1}{\underset{r=1}{\dsum }}Q_{r}
\end{equation*}%
then we can write;%
\begin{eqnarray*}
\left( \overset{n}{\underset{s=0}{\dsum }}\ \widetilde{Q}_{n+s}\right) +%
\widetilde{Q}_{n+1} &=&\left( \overset{n}{\underset{s=0}{\dsum }}%
Q_{n+s}+\varepsilon Q_{n+s+1}\right) +\left( Q_{n+1}+\varepsilon
Q_{n+2}\right) \\
&=&\left[ \left( \overset{n}{\underset{s=0}{\dsum }}Q_{n+s}\right) +Q_{n+1}%
\right] +\varepsilon \left[ \left( \overset{n}{\underset{s=0}{\dsum }}%
Q_{n+s+1}\right) +Q_{n+2}\right] \\
&=&\left[ \left( \overset{n+p}{\underset{r=1}{\dsum }}Q_{r}-\overset{n-1}{%
\underset{r=1}{\dsum }}Q_{r}\right) +Q_{n+1}\right] \\
&&+\varepsilon \left[ \left( \overset{n+p+1}{\underset{r=1}{\dsum }}Q_{r}-%
\overset{n}{\underset{r=1}{\dsum }}Q_{r}\right) +Q_{n+2}\right] \\
&=&Q_{n+p+2}+\varepsilon Q_{n+p+3} \\
&=&\widetilde{Q}_{n+p+2}.
\end{eqnarray*}%
3) \ Using (2.3), we obtain the result clearly;%
\begin{eqnarray*}
\overset{n}{\underset{s=1}{\dsum }}\ \widetilde{Q}_{2s-1} &=&\overset{n}{%
\underset{s=1}{\dsum }}\left( Q_{2s-1}+\varepsilon Q_{2s}\right) \\
&=&\overset{n}{\underset{s=1}{\dsum }}\left( Q_{2s}-Q_{2s-2}\right)
+\varepsilon \overset{n}{\underset{s=1}{\dsum }}\left(
Q_{2s+1}-Q_{2s-1}\right) \\
&=&Q_{2n}-Q_{0}+\varepsilon \left( Q_{2n+1}-Q_{1}\right) \\
&=&\widetilde{Q_{2n}}-\widetilde{Q_{0}}.
\end{eqnarray*}%
4) \ Similarly in proof 3);%
\begin{eqnarray*}
\overset{n}{\underset{s=1}{\dsum }}\ \widetilde{Q_{2s}} &=&\overset{n}{%
\underset{s=1}{\dsum }}\left( Q_{2s}+\varepsilon Q_{2s+1}\right) \\
&=&\overset{n}{\underset{s=1}{\dsum }}\left( Q_{2s+1}-Q_{2s-1}\right)
+\varepsilon \overset{n}{\underset{s=1}{\dsum }}\left( Q_{2s+2}-Q_{2s}\right)
\\
&=&Q_{2n+1}-Q_{1}+\varepsilon \left( Q_{2n+2}-Q_{2}\right) \\
&=&\widetilde{Q}_{2n+1}-\widetilde{Q_{1}}
\end{eqnarray*}
\end{proof}

The explicit formulas for Fibonacci and Lucas numbers were given by
Jacques-Phillipe-Marie Binet in 1843, which are called Binet formulas. The
positive and negative roots of the quadratic equation $x^{2}-x-1=0$ \ are $%
\alpha $ and $\beta $, respectively. They are;%
\begin{equation*}
\alpha =\frac{1+\sqrt{5}}{2}\text{ \ \ \ and \ \ \ }\beta =\frac{1-\sqrt{5}}{%
2}.
\end{equation*}%
Then the Binet formulas for Fibonacci and Lucas numbers are given by (see
Koshy, \cite{Koshy})%
\begin{equation*}
F_{n}=\frac{\alpha ^{n}-\beta ^{n}}{\alpha -\beta }\text{ \ \ \ \ and \ \ \
\ }L_{n}=\alpha ^{n}+\beta ^{n}.
\end{equation*}%
Now we will express the following theorems which are about Binet formula and
Cassini identitiy.

\begin{theorem}
Let $\widetilde{Q_{n}}$ and $\widetilde{K_{n}}$ be dual Fibonacci and dual
Lucas quaternions, respectively. For $n\geq 0$, the Binet formulas for these
quaternions are given as;%
\begin{equation*}
\widetilde{Q_{n}}=\dfrac{\alpha ^{\ast }\alpha ^{n}-\beta ^{\ast }\beta ^{n}%
}{\alpha -\beta }
\end{equation*}%
and%
\begin{equation*}
\widetilde{K_{n}}=\alpha ^{\ast }\alpha ^{n}+\beta ^{\ast }\beta ^{n}
\end{equation*}%
where $\alpha ^{\ast }=\underline{\alpha }\left( 1+\varepsilon \alpha
\right) $ \ ,\ $\beta ^{\ast }=\underline{\beta }\left( 1+\varepsilon \beta
\right) $ \ and $\ \underline{\alpha }=1+i\alpha +j\alpha ^{2}+k\alpha ^{3}$
\ , \ $\underline{\beta }=1+i\beta +j\beta ^{2}+k\beta ^{3}$.
\end{theorem}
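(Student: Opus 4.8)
The plan is to reduce everything to a Binet formula for the ordinary Fibonacci and Lucas quaternions, and then lift it to the dual setting through the defining relations $\widetilde{Q_{n}}=Q_{n}+\varepsilon Q_{n+1}$ and $\widetilde{K_{n}}=K_{n}+\varepsilon K_{n+1}$ of (2.3) and (2.4). First I would substitute the scalar Binet formulas $F_{n}=(\alpha^{n}-\beta^{n})/(\alpha-\beta)$ and $L_{n}=\alpha^{n}+\beta^{n}$ into the definition (1.2) of $Q_{n}$ and factor the common powers $\alpha^{n}$, $\beta^{n}$ out of the four coordinates. Collecting the quaternion basis elements produces exactly $\underline{\alpha}=1+i\alpha+j\alpha^{2}+k\alpha^{3}$ and $\underline{\beta}=1+i\beta+j\beta^{2}+k\beta^{3}$, giving the intermediate identity $Q_{n}=(\underline{\alpha}\alpha^{n}-\underline{\beta}\beta^{n})/(\alpha-\beta)$. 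The same computation on (1.3) yields $K_{n}=\underline{\alpha}\alpha^{n}+\underline{\beta}\beta^{n}$.

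Second, I would insert these two closed forms into the dual definitions. For the Fibonacci case one writes
\begin{equation*}
\widetilde{Q_{n}}=Q_{n}+\varepsilon Q_{n+1}=\frac{\underline{\alpha}\alpha^{n}-\underline{\beta}\beta^{n}}{\alpha-\beta}+\varepsilon\,\frac{\underline{\alpha}\alpha^{n+1}-\underline{\beta}\beta^{n+1}}{\alpha-\beta},
\end{equation*}
and the decisive step is to factor $\alpha^{n}$ from the two terms carrying $\underline{\alpha}$ and $\beta^{n}$ from the two terms carrying $\underline{\beta}$. This produces $\underline{\alpha}(1+\varepsilon\alpha)\alpha^{n}$ and $\underline{\beta}(1+\varepsilon\beta)\beta^{n}$, which are precisely $\alpha^{\ast}\alpha^{n}$ and $\beta^{\ast}\beta^{n}$ by the definitions $\alpha^{\ast}=\underline{\alpha}(1+\varepsilon\alpha)$ and $\beta^{\ast}=\underline{\beta}(1+\varepsilon\beta)$. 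The Lucas identity follows the identical pattern with the minus sign replaced by a plus and no denominator present.

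The only point demanding care—rather than a genuine obstacle—is the bookkeeping of noncommutativity. Since $\alpha^{n}$, $\beta^{n}$, the dual unit $\varepsilon$, and the scalar $\alpha-\beta=\sqrt{5}$ are all central with respect to the quaternion units $i,j,k$, they commute freely with $\underline{\alpha}$ and $\underline{\beta}$, so the regrouping $\underline{\alpha}\alpha^{n}(1+\varepsilon\alpha)=\underline{\alpha}(1+\varepsilon\alpha)\alpha^{n}=\alpha^{\ast}\alpha^{n}$ is legitimate provided the quaternionic factors $\underline{\alpha},\underline{\beta}$ are kept on the left throughout. With that observed, both formulas drop out by elementary factoring, and no Fibonacci or Lucas identity beyond the scalar Binet formulas quoted before the statement is needed.
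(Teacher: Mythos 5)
Your argument is correct and follows essentially the same route as the paper: both proofs rest on the quaternionic Binet formula $Q_{n}=(\underline{\alpha}\alpha^{n}-\underline{\beta}\beta^{n})/(\alpha-\beta)$ (which the paper cites from Hal\i c\i\ \cite{Halici} while you re-derive it from the scalar Binet formulas) and then lift it through $\widetilde{Q_{n}}=Q_{n}+\varepsilon Q_{n+1}$ by factoring out $\underline{\alpha}\alpha^{n}$ and $\underline{\beta}\beta^{n}$ to expose $1+\varepsilon\alpha$ and $1+\varepsilon\beta$. Your remark on centrality of the real scalars and the dual unit is a welcome justification of the regrouping that the paper performs silently.
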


\begin{proof}
In \cite{Halici}, Hal\i c\i\ gave the Binet formula for Fibonacci quaternion
by%
\begin{equation}
Q_{n}=\dfrac{\underline{\alpha }\alpha ^{n}-\underline{\beta }\beta ^{n}}{%
\alpha -\beta }  \tag{2.12}
\end{equation}%
where $\alpha =\dfrac{1+\sqrt{5}}{2}$ \ , \ $\beta =\dfrac{1-\sqrt{5}}{2}$ \
and $\underline{\alpha }=1+i\alpha +j\alpha ^{2}+k\alpha ^{3}$ , \ $%
\underline{\beta }=1+i\beta +j\beta ^{2}+k\beta ^{3}.$ Thus it can be
written;%
\begin{equation}
Q_{n+1}=\dfrac{\underline{\alpha }\alpha ^{n+1}-\underline{\beta }\beta
^{n+1}}{\alpha -\beta }.  \tag{2.13}
\end{equation}%
So by using the equations (2.3), (2.12) and (2.13), we have;%
\begin{eqnarray*}
\widetilde{Q_{n}} &=&Q_{n}+\varepsilon Q_{n+1} \\
&=&\dfrac{\underline{\alpha }\alpha ^{n}-\underline{\beta }\beta ^{n}}{%
\alpha -\beta }+\varepsilon \text{ }\dfrac{\underline{\alpha }\alpha ^{n+1}-%
\underline{\beta }\beta ^{n+1}}{\alpha -\beta } \\
&=&\frac{\underline{\alpha }\alpha ^{n}\left( 1+\varepsilon \alpha \right) -%
\underline{\beta }\beta ^{n}\left( 1+\varepsilon \beta \right) }{\alpha
-\beta }.
\end{eqnarray*}%
Taking $\underline{\alpha }\left( 1+\varepsilon \alpha \right) $ $=\alpha
^{\ast }$ \ and $\underline{\beta }\left( 1+\varepsilon \beta \right) $ $%
=\beta ^{\ast }$ in last equation, then the proof completed as;%
\begin{equation*}
\widetilde{Q_{n}}=\dfrac{\alpha ^{\ast }\alpha ^{n}-\beta ^{\ast }\beta ^{n}%
}{\alpha -\beta }.
\end{equation*}%
Similarly in \cite{Halici}, the Binet formula for Lucas quaternion is given
by%
\begin{equation*}
T_{n}=\underline{\alpha }\alpha ^{n}+\underline{\beta }\beta ^{n}.
\end{equation*}%
We can clearly compute that;%
\begin{eqnarray*}
\widetilde{K_{n}} &=&K_{n}+\varepsilon K_{n+1} \\
&=&\underline{\alpha }\alpha ^{n}+\underline{\beta }\beta ^{n}+\varepsilon
\left( \underline{\alpha }\alpha ^{n+1}+\underline{\beta }\beta ^{n+1}\right)
\\
&=&\alpha ^{\ast }\alpha ^{n}+\beta ^{\ast }\beta ^{n}.
\end{eqnarray*}
\end{proof}

\begin{theorem}
Let $\widetilde{Q_{n}}$ and $\widetilde{K_{n}}$ be dual Fibonacci and dual
Lucas quaternions, respectively. For $n\geq 1$, the Cassini identities for
these quaternions are given as;%
\begin{equation*}
\widetilde{Q}_{n-1}\widetilde{Q}_{n+1}-\widetilde{Q_{n}}^{2}=\left(
-1\right) ^{n}\left[ 2\widetilde{Q_{1}}-3k-\varepsilon \text{ }9k\right]
\end{equation*}%
and 
\begin{equation*}
\widetilde{K}_{n-1}\widetilde{K}_{n+1}-\widetilde{K_{n}}^{2}=5\left(
-1\right) ^{n}\left[ 2\widetilde{K_{1}}-4k+\varepsilon \left( -2i-17k\right) %
\right]
\end{equation*}
\end{theorem}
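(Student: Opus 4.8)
The plan is to reduce each dual Cassini identity to ordinary (nondual) Cassini-type identities for the Fibonacci and Lucas quaternions, which can in turn be extracted from the Binet formulas established in the previous theorem. First I would expand the left-hand side with the dual multiplication rule (2.6). Writing $\widetilde{Q}_{n-1}=Q_{n-1}+\varepsilon Q_n$, $\widetilde{Q}_{n+1}=Q_{n+1}+\varepsilon Q_{n+2}$ and $\widetilde{Q_n}=Q_n+\varepsilon Q_{n+1}$, (2.6) gives
\begin{equation*}
\widetilde{Q}_{n-1}\widetilde{Q}_{n+1}-\widetilde{Q_n}^2=\left(Q_{n-1}Q_{n+1}-Q_n^2\right)+\varepsilon\left(Q_{n-1}Q_{n+2}-Q_{n+1}Q_n\right),
\end{equation*}
so the real part is the classical Cassini identity for Fibonacci quaternions and the dual part is a mixed-index companion of it. The identical expansion applied to $\widetilde{K}_{n-1}\widetilde{K}_{n+1}-\widetilde{K_n}^2$ produces $\left(K_{n-1}K_{n+1}-K_n^2\right)+\varepsilon\left(K_{n-1}K_{n+2}-K_{n+1}K_n\right)$. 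Thus the whole statement is reduced to four quaternion-level identities.

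To evaluate these I would use the Binet formula (2.12), $Q_m=(\underline{\alpha}\alpha^m-\underline{\beta}\beta^m)/(\alpha-\beta)$, together with $T_m=\underline{\alpha}\alpha^m+\underline{\beta}\beta^m$ for the Lucas quaternions. Substituting and multiplying out, the pure $\underline{\alpha}^2$ and $\underline{\beta}^2$ terms cancel in every difference because $\alpha$ and $\beta$ are scalars, leaving only the cross terms $\underline{\alpha}\,\underline{\beta}$ and $\underline{\beta}\,\underline{\alpha}$ weighted by powers of $\alpha\beta=-1$. Collecting these and using $\alpha+\beta=1$, $\alpha\beta=-1$ and $\alpha-\beta=\sqrt5$ reduces each of the four identities to a single fixed quaternion, namely a scalar multiple of $\alpha\,\underline{\beta}\,\underline{\alpha}-\beta\,\underline{\alpha}\,\underline{\beta}$ (the Lucas versions carrying an extra factor coming from $(\alpha-\beta)^2=5$). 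The last step is to re-express the resulting constant quaternions through $\widetilde{Q_1}$, $\widetilde{K_1}$ and the basis units so as to match the stated right-hand sides.

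The main obstacle is the noncommutativity of the quaternion coefficients: since $\underline{\alpha}\,\underline{\beta}\neq\underline{\beta}\,\underline{\alpha}$, one cannot borrow the scalar Cassini shortcut and must instead compute both ordered products $\underline{\alpha}\,\underline{\beta}$ and $\underline{\beta}\,\underline{\alpha}$ explicitly from the quaternion multiplication rule. Keeping track of the order throughout the dual expansion — in particular in the mixed-index terms $Q_{n-1}Q_{n+2}-Q_{n+1}Q_n$ and $K_{n-1}K_{n+2}-K_{n+1}K_n$, whose two factors no longer commute — is where care is essential, because it is precisely these terms that fix the coefficient of $\varepsilon$ in the final answer.

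As an alternative that avoids $\sqrt5$ altogether, I could instead multiply the quaternions out coordinatewise and invoke the scalar identities $F_{n-1}F_{n+1}-F_n^2=(-1)^n$ and $F_mF_n+F_{m+1}F_{n+1}=F_{m+n+1}$ of Vajda, with the analogous Lucas relations; this route is more elementary but generates substantially longer intermediate expressions, so I would prefer the Binet computation above and use the componentwise version only as a numerical cross-check (for instance at $n=1$) to confirm the constant quaternions.
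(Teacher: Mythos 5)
Your opening reduction agrees with the paper's equation (2.14) in spirit, but you evaluate the resulting quaternion identities by a genuinely different route: the paper quotes Hal\i c\i 's identity $Q_{n-1}Q_{n+1}-Q_{n}^{2}=(-1)^{n}(2Q_{1}-3k)$ for the real part and combines D'Ocagne's identity with negafibonacci numbers for the dual part, whereas you propose to run everything through the Binet formula (2.12) and the two ordered products $\underline{\alpha}\,\underline{\beta}$ and $\underline{\beta}\,\underline{\alpha}$. That method is sound, and your insistence on tracking the order of the noncommuting factors is exactly the right instinct; it is also what buys you a uniform treatment of all four quaternion-level identities at once.

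That very care, however, puts you on a collision course with the stated formula. Your (correct) expansion of the dual part is $Q_{n-1}Q_{n+2}-Q_{n+1}Q_{n}$, while the paper's (2.14) carries $Q_{n-1}Q_{n+2}-Q_{n}Q_{n+1}$; these differ by the nonzero commutator $Q_{n}Q_{n+1}-Q_{n+1}Q_{n}$ (at $n=1$ it equals $2i+2j-2k$), and only the paper's version evaluates to $(-1)^{n}(2+4i+6j+k)$ and hence to the displayed $-9\varepsilon k$. Your Binet computation instead yields $Q_{n-1}Q_{n+2}-Q_{n+1}Q_{n}=\frac{(-1)^{n}}{\sqrt{5}}\left(\alpha\,\underline{\beta}\,\underline{\alpha}-\beta\,\underline{\alpha}\,\underline{\beta}\right)$, which is the same constant as the real part, so you will land on $(-1)^{n}(1+\varepsilon)(2Q_{1}-3k)$ rather than the right-hand side in the theorem. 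Your own proposed cross-check at $n=1$ exposes the mismatch: $Q_{0}Q_{3}-Q_{2}Q_{1}=-2-2i-4j-3k$, whereas the theorem's $\varepsilon$-coefficient at $n=1$ is $-(2Q_{2}-9k)=-2-4i-6j-k$. So as written your plan cannot terminate at the stated identity; you must either adopt the paper's ordering of the mixed term (which does not follow from the multiplication rule (2.6)) or present your result as a corrected form of the Cassini identity. The same caveat applies verbatim to the Lucas half.
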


\begin{proof}
From the equations (2.3), (2.5) and (2.6), we have ;%
\begin{equation}
\widetilde{Q}_{n-1}\widetilde{Q}_{n+1}-\widetilde{Q_{n}}^{2}=\left(
Q_{n-1}Q_{n+1}-Q_{n}^{2}\right) +\varepsilon \left(
Q_{n-1}Q_{n+2}-Q_{n}Q_{n+1}\right) .  \tag{2.14}
\end{equation}%
If we use the equation for Fibonacci numbers called D'ocagne's identity
which is $F_{m}F_{n+1}-F_{m+1}F_{n}=\left( -1\right) ^{n}F_{m-n}$ (see
Weisstein,\cite{Weis} ) and the identitiy of negafibonacci numbers $%
F_{-n}=\left( -1\right) ^{n+1}F_{n}$ (see Knuth, \cite{Knuth} ), we get%
\begin{equation*}
Q_{n-1}Q_{n+2}-Q_{n}Q_{n+1}=(-1)^{n}(2+4i++6j+k).
\end{equation*}%
Also Hal\i c\i\ denoted the equation in \cite{Halici} as;%
\begin{equation*}
Q_{n-1}Q_{n+1}-Q_{n}^{2}=\left( -1\right) ^{n}\left( 2Q_{1}-3k\right) .
\end{equation*}%
Putting these last two equations in (2.14) and using the definition of
Fibonacci quaternion, we reach the result;%
\begin{eqnarray*}
\widetilde{Q}_{n-1}\widetilde{Q}_{n+1}-\widetilde{Q_{n}}^{2} &=&\left(
Q_{n-1}Q_{n+1}-Q_{n}^{2}\right) +\varepsilon \left(
Q_{n-1}Q_{n+2}-Q_{n}Q_{n+1}\right) \\
&=&\left( -1\right) ^{n}\left( 2Q_{1}-3k\right) +\varepsilon
((-1)^{n}(2+4i++6j+k)) \\
&=&\left( -1\right) ^{n}(2\widetilde{Q_{1}}-3k-\varepsilon 9k).
\end{eqnarray*}%
By the equation (2.4), the identity of Lucas numbers which are $%
L_{n-1}L_{n+1}-L_{n}^{2}=5(-1)^{n-1}$ (see Koshy, \cite{Koshy}) and $%
L_{n+2}=L_{n+1}+L_{n}$ (see Dunlap,\cite{Dun}), the proof completed as;%
\begin{equation*}
\widetilde{K}_{n-1}\widetilde{K}_{n+1}-\widetilde{K_{n}}^{2}=5\left(
-1\right) ^{n}\left[ 2\widetilde{K_{1}}-4k+\varepsilon \left( -2i-17k\right) %
\right] .
\end{equation*}
\end{proof}

\bigskip

Semra Kaya Nurkan

University of U\c{s}ak

Department of Mathematics

64200, U\c{s}ak, Turkey

E-mail: semra.kaya@usak.edu.tr, semrakaya\_gs@yahoo.com

\bigskip

\.{I}lkay Arslan G\"{u}ven

University of Gaziantep

Department of Mathematics

\c{S}ehitkamil, 27310, Gaziantep, Turkey

E-mail: iarslan@gantep.edu.tr, ilkayarslan81@hotmail.com

\end{document}